\newtheorem{proposition}{Proposition}[section]
\newtheorem{example}[proposition]{Example}
\newcommand{\C}{\mathbb{C}}
\newcommand{\X}{\mathbb{X}}
\newcommand{\SE}[1]{\mathbf{SplExt}(#1)}
\newcommand{\K}{K}
\newcommand{\Pro}{P}
\newcommand{\Z}{\mathbb{Z}}
\newcommand{\Set}{\mathbf{Set}}
\newcommand{\SplExt}{\textnormal{SplExt}}
\newcommand{\Aut}{\textnormal{Aut}}
\newcommand{\op}{\textnormal{op}}
\begin{document}
\title{A note on the relationship between action accessible and weakly action representable categories}
\author{James Richard Andrew Gray}
\maketitle
\begin{abstract}
The main purpose of this paper is to show that the converse of the known
implication \emph{weakly action representable implies action accessible} is false.
In particular we show that both action accessibility, as well as the (at
least formally stronger) condition requiring the existence of all normalizers
do not imply weakly-action-representability even for varieties.
In addition we show that in contrast to both action accessibility
and the condition requiring the existence of all normalizers, weakly-action
representability is not necessarily inherited by Birkoff subcategories.
\end{abstract}
\section{Introduction}
Recall that for a pointed category $\C$, a split extension (of $B$ with kernel
 $X$) is a diagram in $\C$
\begin{equation}
\vcenter{
\label{diagram: split extension}
\xymatrix{X \ar[r]^{\kappa} & A \ar@<0.5ex>[r]^{\alpha} & B\ar@<0.5ex>[l]^{\beta}}
}
\end{equation}
where $\kappa$ is the kernel of $\alpha$, and $\alpha\beta = 1_B$.  A morphism of split extensions is a diagram in $\C$
\begin{equation}
\label{diagram: morphism of split extensions}
\vcenter{
\xymatrix{
X \ar[r]^{\kappa} \ar[d]_{u}& A \ar@<0.5ex>[r]^{\alpha}\ar[d]^{v} & B\ar@<0.5ex>[l]^{\beta}\ar[d]^{w}\\
X' \ar[r]^{\kappa'} & A' \ar@<0.5ex>[r]^{\alpha'} & B'\ar@<0.5ex>[l]^{\beta'}
}
}
\end{equation}
where the top and bottom rows are split extensions (the domain and codomain respectively), and $v\kappa = \kappa' u$, $v\beta = \beta' w$ and $w\alpha=\alpha'v$. 
Let us denote by $\SE{\C}$ the category of split extensions in $\C$, and by $\K$ and $\Pro$ the functors sending a split extension to its kernel and codomain, respectively. These data together form a span
\begin{equation}
\label{points span i}
\vcenter{
\xymatrix{
\C & \SE{\C} \ar[l]_-{\Pro} \ar[r]^-{\K} & \C.\\
}
}
\end{equation}
Recall also that when $\C$ is pointed protomodular
\cite{BOURN:2000a}, for each object
$X$ in $\C$, the assignment of each object $B$
to the isomorphism class of split extensions $E$
with $\K(E)=X$ and $\Pro(E)=B$, determines a functor
$\SplExt(-,X):\C^{\op}\to \Set$ which assigns to each morphism
$p:E\to B$ the morphism $\SplExt(p,X)$ defined by \emph{pulling back along $p$}.
The category $\C$ is action representable in the sense of
\cite{BORCEUX_JANELIDZE_KELLY:2005a}
 when each of these functors is representable and
is weakly action representable \cite{JANELIDZE:2022} when for each $X$ in $\C$ there
exists a weak representation, that is there is a pair $(M,\mu)$
where $M$ is an object in $\C$ and $\mu: \SplExt(-,X) \to \hom(-,M)$
is a monomorphism. Note that when the functor $\SplExt(-,X)$ is
representable the representing object will be written $[X]$.

Action representability can also be rephrased as requiring
that for each $X$ in
$\C$ the fiber $\K^{-1}(X)$ has a terminal object, and action accessibility
\cite{BOURN_JANELIDZE:2009} can be phrased, by the weakening of this, to instead
require that for each $X$ in $\C$ the fiber $\K^{-1}(X)$ has \emph{enough
sub-terminal objects}, that is,
each object admits a morphism into a sub-terminal object (= an object admitting
at most one morphism into it). The sub-terminal objects in $\K^{-1}(X)$ are
called faithful extensions.

In \cite{JANELIDZE:2022}, G. Janelidze proved for a semi-abelian category (in
 the sense of Janelidze, Marki, Tholen \cite{JANELIDZE_MARKI_THOLEN:2002})
weakly-action-representability implies action accessibility (Theorem 4.6 of
\cite{JANELIDZE:2022}).
The main purpose of this paper is to show that the converse does not hold.
We show that a Birkoff subcategory of a (weakly)
action representable category is not necessarily weakly action representable.
This should be contrasted with the fact that a Birkoff sub-category of
action accessible category is necessarily action accessible
\cite{BOURN_JANELIDZE:2009}, and the immediate Proposition
\ref{proposition: normalizers restrict} below which
shows if $\C$ is a category admitting all normalizers (in the sense of \cite{GRAY:2013b} or
 in the sense of \cite{BOURN_GRAY:2015}), then every full
subcategory of $\C$ closed under sub-objects and finite limits, admits
all normalizers. Combining these two facts we show that the each category of
$n$-solvable groups ($n\geq 3$) is action accessible and has normalizers, but
is not weakly action representable.
\section{The results}
In this section we prove our main results.

Recall that the normalizer 
of a monomorphism $f:W\to X$ in \cite{GRAY:2013b} was defined to be the
universal factorization of $f$ as normal monomorphism followed
by a monomorphism
\[
\xymatrix{
W \ar@/_2ex/[rr]_{f} \ar[r]^{n} 
 & N\ar[r]^{m} &
X.
}
\]
A different definition was given in \cite{BOURN_GRAY:2015}, which in pointed, finitely complete contexted can be formulated as a commutative diagram
\[
\xymatrix{
W \ar@/_2ex/[dd]_{f} \ar[d]^{n} \ar[r]^{\kappa} & R \ar[d]^{\langle r_1,r_2\rangle}\\
N\ar[r]^-{\langle 0,1\rangle}\ar[d]^{m} & N\times N\\
X
}
\]
where the upper square is a pullback and the morphisms $r_1,r_2 : R\to N$ are the projections of an equivalence relation, which is universal amongst such commutative diagrams. The two definitions coincide in the pointed exact protomodular context, where $r_1,r_2 : R\to N$ is necessarily the kernel pair of its coequalizer, which in turn is necessarily a normal epimorphism with kernel $n$.

\begin{proposition}\label{proposition: normalizers restrict}
Let $\C$ be a pointed category admitting normalizers (in either sense). If $\X$ is
a full sub-category of $\C$ closed under subobjects and finite limits,
then $\X$ admits normalizers (in the same sense).
\end{proposition}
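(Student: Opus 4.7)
\emph{Plan.} Given a monomorphism $f:W\to X$ in $\X$, the strategy is to form its normalizer in $\C$, verify that all of the defining data already lies in $\X$, and then transfer the universal property to $\X$ via fullness of the inclusion $\X\hookrightarrow\C$.

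In the first sense, the normalizer of $f$ in $\C$ is a factorization $W \xrightarrow{n} N \xrightarrow{m} X$ with $n$ a normal monomorphism and $m$ a monomorphism in $\C$; in the second sense one has additionally an equivalence relation $\langle r_1,r_2\rangle:R\to N\times N$ and a morphism $\kappa:W\to R$ making the upper square of the defining diagram a pullback. Since $m$ is a monomorphism with codomain $X\in\X$, closure of $\X$ under subobjects forces $N\in\X$. For the second sense one then observes that $N\times N\in\X$ by closure under finite limits, and since $\langle r_1,r_2\rangle$ is a monomorphism, subobject closure gives $R\in\X$; the structural maps witnessing $R\rightrightarrows N$ as an equivalence relation, together with the pullback square, consist of morphisms between objects already in $\X$, and so lie in $\X$ by fullness and retain their universal properties since $\X$ is closed under finite limits. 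Once all of the defining data lives in $\X$, the universal property transfers at once: any competing diagram of the required shape in $\X$ is also such a diagram in $\C$, so universality of the normalizer in $\C$ supplies a unique comparison morphism in $\C$, which lies in $\X$ by fullness.

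The main technical obstacle arises only in the first sense, where one must additionally verify that $n$ is a normal monomorphism in $\X$ --- that is, realizable as the kernel of some morphism with codomain in $\X$ --- and not merely in $\C$. My plan for this is to take a defining morphism $q:N\to Q$ in $\C$ with $n=\ker q$, observe that its kernel pair already lies in $\X$ as a subobject of $N\times N\in\X$, and exploit the pointed finitely complete structure of $\X$ to extract a morphism in $\X$ with the same kernel; in the pointed exact protomodular setting the two definitions coincide, reducing the question to the second-sense argument already handled.
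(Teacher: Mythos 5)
Your overall strategy coincides with the paper's (whose proof is a one-line ``easy to check''): take the normalizer of $f$ computed in $\C$, observe that all its data lies in $\X$ by closure under subobjects and finite limits, and transfer the universal property using fullness and the fact that competing diagrams in $\X$ are competing diagrams in $\C$. For the second sense (the Bourn--Gray definition) your argument is correct and complete in outline: $N$, $N\times N$ and $R$ lie in $\X$, and the equivalence-relation and pullback conditions are finite-limit conditions, so they hold in $\X$ if and only if they hold in $\C$.

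The gap is in your treatment of the first sense. Reading ``normal monomorphism'' as ``kernel of a morphism with codomain in $\X$'', you correctly isolate the problem that $n$ is only known to be the kernel of some $q:N\to Q$ with $Q$ in $\C$, but your proposed repair does not go through: from the kernel pair $R_q\le N\times N$ (which indeed lies in $\X$) one cannot ``extract a morphism in $\X$ with kernel $n$'' using pointedness and finite limits alone --- producing such a morphism amounts to forming a quotient of $N$ (a coequalizer of $R_q$, equivalently a cokernel of $n$), and neither $\C$ nor $\X$ is assumed to have such colimits, nor is $\X$ assumed closed under quotients; your fallback to the pointed exact protomodular case proves strictly less than the stated proposition. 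Moreover, under the strict kernel reading the difficulty is not merely technical: take $\C=\mathbf{Ab}$ (every mono is a kernel, so normalizers exist and are the codomains) and $\X$ the torsion-free abelian groups, which is full and closed under subobjects and finite limits; for $W=2\Z\oplus 0\le \Z\oplus\Z$ both $\{(a,b): a\equiv b \bmod 2\}$ and $2\Z\oplus\Z$ give factorizations whose middle term has torsion-free quotient by $W$, but any common refinement would have to be all of $\Z\oplus\Z$, whose quotient by $W$ has torsion, so no terminal factorization exists. Hence no finite-limit argument can close your step. The way to make the first-sense case work at the stated level of generality --- and presumably what the paper's ``easy to check'' intends --- is to use the equivalence-relation formulation of normality (a monomorphism is normal when it is normal to some equivalence relation): the witnessing equivalence relation is a subobject of $N\times N$, hence lies in $\X$, and the conditions expressing ``normal to'' are finite-limit conditions, so normality of $n$ and of any competitor transfers between $\C$ and $\X$ exactly as in your second-sense argument. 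With that reading your proof closes; with the kernel reading it cannot be closed under the stated hypotheses alone (one needs, e.g., closure of $\X$ under quotients, as holds for the Birkhoff subcategories used later in the paper).
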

\begin{proof}
It is easy to check that under the conditions above the normalizer
in $\C$ of a monomorphism $f$ in $\X$ is also the normalizer of $f$
in $\X$.
\end{proof}
Recall that a span of monomorphisms $m:S\to B$ and $m':S\to B'$ (sometimes
called an amalgum) in a category $\C$ can be amalgamated in $\C$ if there
exist monomorphisms $u:B\to D$ and $u':B'\to D$ in $\C$ such that $mu=m'u'$.
\begin{proposition}
Let $\C$ be a action representable category, and let $\X$ be a Birkoff
subcategory of $\C$. The category $\X$ is not weakly action representable (and
hence not action representable), if there exist monomorphisms $m:S\to B$ and $m':S\to B'$ in $\X$, monomorphisms $u:B\to D$ and $u':B'\to D$ in $\C$, and $X$ in $\X$ with a monomorphism $v:D\to [X]$ in $\C$ such that
\begin{enumerate}[(i)]
\item  $um=u'm'$;
\item $m$ and $m'$ cannot be amalgamated in $\X$;
\item the split extensions corresponding to $vu$ and $vu'$ in $\C$ are
in $\X$.
\end{enumerate}
\end{proposition}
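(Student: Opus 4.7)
The plan is to argue by contradiction. Suppose $\X$ is weakly action representable, and let $(M,\mu)$ be a weak representation of $\SplExt_\X(-,X) : \X^{\op} \to \Set$ inside $\X$, with $\mu : \SplExt_\X(-,X) \to \hom_\X(-,M)$ a monomorphism of functors. Since $\C$ is action representable, the morphisms $vu : B \to [X]$ and $vu' : B' \to [X]$ of $\C$ correspond to split extensions $E_1 \in \SplExt_\C(B,X)$ and $E_2 \in \SplExt_\C(B',X)$; by hypothesis (iii) these already live in $\X$, so I may regard them as elements of $\SplExt_\X(B,X)$ and $\SplExt_\X(B',X)$. Set $\bar u := \mu_B(E_1) : B \to M$ and $\bar u' := \mu_{B'}(E_2) : B' \to M$ in $\X$.

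Next I would verify the equality $\bar u m = \bar u' m'$. Using (i), one has $(vu)\circ m = v \circ um = v \circ u'm' = (vu')\circ m'$ in $\C$, so the pullbacks of $E_1$ along $m$ and of $E_2$ along $m'$ agree (both correspond to the same morphism $S \to [X]$). Since $\X$ is a Birkoff subcategory it is closed under finite limits in $\C$, so this common pullback lies in $\SplExt_\X(S,X)$; hence $\SplExt_\X(m,X)(E_1) = \SplExt_\X(m',X)(E_2)$, and the naturality square for $\mu$ applied on both sides yields $\bar u \circ m = \bar u' \circ m'$.

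The core step is showing that $\bar u$ (and symmetrically $\bar u'$) is a monomorphism in $\X$. Given $f,g : Y \to B$ in $\X$ with $\bar u f = \bar u g$, naturality of $\mu$ gives $\mu_Y(\SplExt_\X(f,X)(E_1)) = \mu_Y(\SplExt_\X(g,X)(E_1))$, and since $\mu_Y$ is injective the two pullbacks of $E_1$ along $f$ and $g$ coincide. Translating back through the representation by $[X]$ in $\C$, this means $(vu)\circ f = (vu)\circ g$; since $v$ and $u$ are monomorphisms in $\C$ so is $vu$, and therefore $f=g$. This is the step I expect to be the main obstacle, since one must pass through the representing object $[X]$ of $\C$ (not an object of $\X$) to convert ``$\mu$ separates pullbacks'' into a monicity statement about $\bar u$.

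Finally, $\bar u$ and $\bar u'$ are monomorphisms into the object $M \in \X$ satisfying $\bar u m = \bar u' m'$, so they provide an amalgamation of $m$ and $m'$ inside $\X$, contradicting (ii). Hence no weak representation of $\SplExt_\X(-,X)$ can exist, and $\X$ is not weakly action representable.
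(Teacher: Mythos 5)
Your proof is correct, and its overall skeleton is the same as the paper's: from the given data you manufacture two monomorphisms $B\to M$ and $B'\to M$ into the weak representing object $M$ of $\X$ which agree after composing with $m$ and $m'$, and this contradicts hypothesis (ii). Where you genuinely differ is in how the monicity of these morphisms is obtained. The paper views the split extensions classified by $vu$, $vu'$ and $vum=vu'm'$ as a span of \emph{faithful} extensions lying in $\X$ (condition (iii) is exactly what puts them there) and then simply cites Corollary 4.3 of \cite{JANELIDZE:2022}, which says that a weak representation sends faithful extensions to monomorphisms. You instead prove the needed instance of that corollary from scratch: if $\bar u f=\bar u g$, injectivity of $\mu_Y$ forces the pullbacks of $E_1$ along $f$ and $g$ to coincide in $\SplExt_{\X}(Y,X)$, hence (by fullness of $\X$ and closure under finite limits) in $\SplExt_{\C}(Y,X)$, and representability of the latter by $[X]$ converts this into $(vu)f=(vu)g$, so $f=g$ since $vu$ is monic. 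What your route buys is a self-contained argument that never mentions faithful extensions and does not lean on the hypotheses under which Corollary 4.3 is stated, using only representability in the ambient category $\C$; what the paper's route buys is brevity and a conceptual reading of the hypotheses, namely that (iii) says precisely that certain faithful extensions of $\C$ are faithful extensions of $\X$, after which the quoted corollary does the work you carry out by hand.
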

\begin{proof}
The monomorphisms $vu$, $vu'$, and $vum=vu'm'$ produce the span of
faithful extensions in $\X$. 
\[
\xymatrix{
X\ar[r]^{\kappa}&A\ar@<0.5ex>[r]^{\alpha}&B\ar@<0.5ex>[l]^{\beta}\\
X\ar@{=}[u]\ar@{=}[d]\ar[r]^{\lambda}&R\ar[d]_{g}\ar[u]^{f}\ar@<0.5ex>[r]^{\rho}&S\ar[d]^{m'}\ar[u]_{m}\ar@<0.5ex>[l]^{\sigma}\\
X\ar[r]^{\kappa'}&A'\ar@<0.5ex>[r]^{\alpha'}&B'\ar@<0.5ex>[l]^{\beta'}
}
\]
If $\X$ were weakly action representable, then $X$ would have weak representation
$M$ and there would (by Corollary 4.3 of \cite{JANELIDZE:2022}) be monomorphisms
$i:B\to M$ and $i':B'\to M$ in $\X$ such that $im =i'm'$. This is impossible
since $m$ and $m'$ can't be amalgamated in $\X$.
\end{proof}

\begin{example}
Let $\C$ be the category of groups.  Recall that: $\C$ is action representable \cite{BORCEUX_JANELIDZE_KELLY:2005a} with $[X]=\Aut(X)$ (the automorphism group of $X$),
$\C$ admits normalizers (in the sense of \cite{GRAY:2013b} or equivalently -- in this context --
in the sense of \cite{BOURN_GRAY:2015}), and amalgamation holds in $\C$ (which according to
\cite{KISS_MARKI_PROHLE_THOLEN:1982} was first proved in \cite{SCHREIER:1927}).
It is well-known that every group can be embedded in the automorphism group of an
abelian group (to prove this one can recall that every group can be embedded in the symmetric
group on its underlying set, and the symmetric group on a set $W$ can be embedded in
$\Aut(\Z_2^W)$). Now let $\X$ be the sub-variety of $n$-solvable groups ($n\geq 3$).
In \cite{NEUMANN_B_H:1959} B. H. Neumann has shown that there exists an abelian group $S$, a
$2$-nilpontent group $B$ and two monomorphisms $m:S\to B$ and $m':S\to B$
which can't be amalgamated as a solvable group. Since
$n$-nilpotent implies $n$-solvable, and split extensions with
kernel abelian and codomain $2$-solvable are at most $3$-solvable it follows
by the previous proposition that $\X$ is not weakly action representable (take
$D$ any group with $u:B\to D$ and $u':B'\to D$ monomorphisms 
such that $um=u'm'$ and then take $X=\Z_2^D$). It seems worth pointing out that
a finite such $D$ does exist (see e.g. Corollary 15.2 \cite{NEUMANN_B_H:1954}).
Action accessibility of $\X$ follows from Proposition 2.3 of
\cite{BOURN_JANELIDZE:2009} together with action accessibility of $\C$ (which
in turn follows immediately from $\C$ being action representable).
Note that $\X$ also has normalizers by
Proposition \ref{proposition: normalizers restrict}. Action
accessibility of $\X$ can then also be obtained from Proposition 4.5 of
\cite{BOURN_GRAY:2015} (see also \cite{GRAY:2015a} where it is proved that
action accessibility is equivalent to the existence of certain normalizers). 
\end{example}

\end{document}